\renewcommand{\paragraph}{\roman{paragraph}}
\newcommand{\F}{\mathbb{F}}
\newtheorem{thm}{Theorem}[section]
\newtheorem{lem}[thm]{Lemma}
\newtheorem{prop}[thm]{Proposition}
\newtheorem{Remark}[thm]{Remark}
\newtheorem{Def}[thm]{Definition}
\newtheorem{Cor}[thm]{Corollary}
\begin{document}
\title{\bf Sufficient conditions for STS$(3^k)$ of 3-rank $\leq 3^k-r$ to be resolvable}
\author{
Yaqi Lu, Minjia Shi\thanks{Yaqi Lu and Minjia Shi, School of Mathematical Sciences, Anhui University, Hefei, Anhui, 230601,
China, {\tt smjwcl.good@163.com, LyqSunshine8@163.com.}} 
}

\date{}
\maketitle

{\bf Abstract:} {Based on the structure of non-full-$3$-rank $STS(3^k)$ and the orthogonal Latin squares, we mainly give sufficient conditions for $STS(3^k)$ of $3$-rank $\leq 3^k-r$ to be resolvable in the present paper. Under the conditions, the block set of $STS(3^k)$ can be partitioned into $\frac{3^k-1}{2}$ parallel classes, i.e., $\frac{3^k-1}{2}$ $1$-$(v,3,1)$ designs. Finally, we prove that $STS(3^k)$ of 3-rank $\leq 3^k-r$ is resolvable under the sufficient conditions.}

{\bf Keywords:} Steiner triple system; Latin squares; Parallel classes; Resolvable.

{\bf MSC (2010) :} 05B15.
\section{Introduction}
\hspace{0.6cm}A Steiner triple system ($STS$ in short) is a pair $(\mathcal{P},\mathcal{B})$ \cite{Codes}, where $\mathcal{P}$ is a set of $v$ elements, called points, and $\mathcal{B}$ is a collection of distinct subsets of $\mathcal{P}$ of size $3$, called blocks, such that every subset of points of size $2$ is contained in precisely $1$ block. In fact, a $STS$ is a  $2$-$(v,3,1)$ design.

Whether before or now, many scholars are interested in $STS(v)$'s structure and solvability, and they obtained some results when $v$ takes a smaller value, such as $STS(7)$, $STS(9)$, $STS(31)$ and so on. In \cite{Lu1}  and \cite{Lu2}, Bryant and Horsley already gave Steiner triple systems without parallel classes, an infinite family of Steiner triple systems without parallel classes and the solvability of $STS(7)$, $STS(9)$, $STS(13)$, $STS(15)$, $STS(19)$ (see also in \cite{Lu3}) and $STS(21)$. In 1999, Lam et al., studied cyclically resolvable cyclic Steiner triple systems of order $21$ and $39$ \cite{Lu4}. Jungnickel et al. in \cite{Lu7} gave the classification of Steiner triple systems on $27$ points with $3$-rank $24$. On the other hand, the existence of resolvable Steiner Quadruple Systems was also studied in \cite{Lu8}. Recently, Jungnickel et al. \cite{Xu} gave the structure of non-full-$3$-rank STS. Moreover, Kirkman \cite{Lu5} proved that there exists a Steiner triple system of order $v$ if and only if $v\equiv 1$ or $3$ mod $6$ (see \cite{Lu6}) and $3^k \equiv 3$ mod $6$, that is, there exist $STS(3^k)$. Therefore, the existence of STS$(3^k)$ is guaranteed.

Motivated by the work listed above, a natural question is that: for any positive integer $k$, when $STS(3^k)$ is resolvable ? This paper is devoted to giving sufficient conditions for the solvability of $STS(3^k)$.

The material is organised as follows. The next section contains the preliminaries of $STS$, which are necessary for the rest of the paper. In Section 3, based on the theorem
of the structure of non-full-3-rank $STS$, we prove that $STS$ $(3^k)$ is resolvable under some conditions and find the sufficient conditions for its solvability.

\section{Preliminaries}

\begin{Def}
A $t$-$(v,k,\lambda)$ design, or briefly a $t$-design, is a pair $(\mathcal{P},\mathcal{B})$, where $\mathcal{P}$ is a set of $v$ elements, called points, and $\mathcal{B}$ is a collection of distinct subsets of $\mathcal{P}$ of size $k$, called blocks, such that every subset of points of size $t$ is contained in precisely $\lambda$ blocks.
\end{Def}

A $STS$ is a pair $(\mathcal{P},\mathcal{B})$ where $\mathcal{P}$ is a set of $v$ elements, called points, and $\mathcal{B}$ is a collection of distinct subsets of $\mathcal{P}$ of size $3$, called blocks, such that every subset of points of size $2$ is contained in precisely $1$ block.

\begin{lem}(Theorem 8.1.3 in \cite{Codes})
Let $(\mathcal{P},\mathcal{B})$ be a $t$-$(v,k,\lambda)$ design. Let $0\leq i\leq t$. Then $(\mathcal{P},\mathcal{B})$ is an $i$-$(v,k,\lambda_{i})$ design, where $\lambda_{i}=\lambda\frac{\left(\begin{matrix}
$v-i$\\
$t-i$
\end{matrix}\right)}{\left(\begin{matrix}
$k-i$\\
$t-i$
\end{matrix}\right)}=\lambda\frac{(v-i)(v-i-1)\cdots(v-t+1)}{(k-i)(k-i-1)\cdots(k-t+1)}$.
\end{lem}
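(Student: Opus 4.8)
The plan is to establish this by a standard double-counting argument applied to a fixed $i$-subset of points. Fix any subset $I \subseteq \mathcal{P}$ with $|I| = i$, and let $\lambda_i$ denote the number of blocks of $\mathcal{B}$ that contain $I$. The goal is to show that this number is the same for every choice of $I$ and to compute its value; once $\lambda_i$ is seen to be independent of $I$, the pair $(\mathcal{P},\mathcal{B})$ is by definition an $i$-$(v,k,\lambda_i)$ design.

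To this end I would count in two ways the number $N$ of incident pairs $(T,B)$, where $T$ is a $t$-subset of $\mathcal{P}$ with $I \subseteq T$ and $B \in \mathcal{B}$ is a block with $T \subseteq B$. Counting first over the $t$-subsets: each $t$-subset $T \supseteq I$ is obtained by adjoining $t-i$ further points chosen from the $v-i$ points outside $I$, giving $\binom{v-i}{t-i}$ choices, and by the $t$-design hypothesis each such $T$ lies in exactly $\lambda$ blocks, whence $N = \lambda \binom{v-i}{t-i}$. Counting instead over the blocks through $I$: each of the $\lambda_i$ blocks $B \supseteq I$ has $k-i$ points outside $I$, from which the $t-i$ additional points of $T$ may be chosen in $\binom{k-i}{t-i}$ ways, so $N = \lambda_i \binom{k-i}{t-i}$. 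Equating the two expressions yields
\[
\lambda_i \binom{k-i}{t-i} = \lambda \binom{v-i}{t-i},
\]
so that $\lambda_i = \lambda \binom{v-i}{t-i} / \binom{k-i}{t-i}$, which is manifestly independent of $I$.

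Finally I would rewrite this ratio in the product form claimed. Expanding the binomials gives $\binom{v-i}{t-i}/\binom{k-i}{t-i} = \frac{(v-i)!\,(k-t)!}{(v-t)!\,(k-i)!}$, and cancelling the common factorials leaves exactly $\frac{(v-i)(v-i-1)\cdots(v-t+1)}{(k-i)(k-i-1)\cdots(k-t+1)}$, each product running over $t-i$ consecutive factors. The argument is entirely elementary, so there is no genuine obstacle; the only point deserving care is to notice that the double count produces the \emph{same} value $\lambda_i$ regardless of which $i$-subset $I$ is fixed, since both sides of the displayed identity depend only on $v,k,t,i,\lambda$ and not on $I$ itself. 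This constancy is precisely the content of the $i$-design property. One could alternatively prove the case $i = t-1$ and induct downward on $i$, but the direct count is cleaner and delivers the closed form in a single step.
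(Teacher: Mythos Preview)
Your argument is correct and is precisely the standard double-counting proof found in textbooks. Note, however, that the paper does not actually prove this lemma: it is stated with attribution to Theorem~8.1.3 of \cite{Codes} and used without further justification, so there is no in-paper proof to compare against beyond that citation. Your write-up is exactly the argument one finds in that reference.
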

By  Lemma 2.2, we know that $STS(v)$  is a $1$-$(v,3,\frac{v-1}{2})$ design.
\begin{Def}
$P$ is called a parallel class if $P\subset \mathcal{B}$ and $(S,P)$ is a $1$-$(v,3,1)$ design.
\end{Def}
\begin{Def}
$STS(v)$ is called resolvable if $\mathcal{B}$ can be partitioned into parallel classes.
\end{Def}
From Definitions 2.3 and 2.4, we know that $STS(v)$ is resolvable if and only if $STS(v)$ can be partitioned into $\frac{v-1}{2}$ $1$-$(v,3,1)$ designs.
\begin{Def}
A Latin square is an $n\times n$ array filled with $n$ different symbols, each occurring exactly once in each row and exactly once in each column.
\end{Def}
Let $A$, $B$, $C$ be three sets. From the definition of Latin square, we know that a Latin square which is denoted by $LS(A,B,C)$ is a set of triples $\{(x,y,z)|(x,y,z)\in A\times B\times C \}$ such that $\forall x\in A$, $y\in B$, there eixsts unique $z\in C$, $(x,y,z)\in LS(A,B,C)$. Similarly, $\forall x\in A$, $z\in C$, there eixsts unique $ y\in B$, $(x,y,z)\in LS(A,B,C)$, $\forall y\in B$, $z\in C$, there eixsts unique $x\in A$, $(x,y,z)\in LS(A,B,C)$.
\section{$STS(3^k)$ of $3$-rank $\leq3^k-r$}
In this section, $v=3^k$, that is, there exist $STS(v)$ of $3$-rank $\leq3^k-r$ for arbitrary $r$. By $V^v$, we denote the vector space of all
$v$-tuples over $\F_{3}$. Denote by $\mathcal{D}$ the set of subspaces of $V^v$, each including the all-one vector and being orthogonal to at least one $STS(v)$; denote
$\mathcal{D}_j=\{D\in \mathcal{D}:dim(D)=j + 1\}$.
\begin{lem} \label{lem2.1}
(Lemma $3.8$, \cite{Xu})
Let $S$ be an arbitary Steiner Triple system $STS(N)$ contained in the triple system $\mathcal{D}$. Then the block set $\mathcal{B}$ of $\mathcal{S}$ splits as follows:\\
(1) for all $i=1,\ldots, M$, a set $\mathcal{B}_i$ of $T(T-1)/6$ blocks such that $(\mathcal{G}_i,\mathcal{B}_i)$ is a Steiner triple systems $\mathcal{S}_i$ on the $T$ points in $\mathcal{G}_i$;\\
(2) for each line $l$ of the affine geometry $\sum=AG(k-t,3)$, a set $\mathcal{B}_l$ of $T^2=3^{2t}$ blocks
 forming a transversal design $TD[3;T]$ on the three groups determined by the points of $l$.

\end{lem}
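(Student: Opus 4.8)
The plan is to read the hypothesis ``$\mathcal{S}$ is contained in the triple system $\mathcal{D}$'' as the assertion that every block of $\mathcal{S}$ is orthogonal to the fixed subspace $D\in\mathcal{D}$ of dimension $(k-t)+1$ to which $\mathcal{S}$ is orthogonal, and to extract the entire block structure from this single linear condition. First I would fix a basis $\mathbf{1}=d^{(0)},d^{(1)},\ldots,d^{(k-t)}$ of $D$ and define the evaluation map $e\colon \mathcal{P}\to \F_3^{k-t}$ by $e(p)=(d^{(1)}_p,\ldots,d^{(k-t)}_p)$. The fibres of $e$ are precisely the groups $\mathcal{G}_1,\ldots,\mathcal{G}_M$, and identifying each fibre with its image yields a bijection between the $M=3^{k-t}$ groups and the points of $\Sigma=AG(k-t,3)$; that every fibre has the common size $T=3^t$ (so $v=3^k=MT$) is the one global balancedness fact, which I address at the end.

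The computational heart is the observation that a block $\{p,q,s\}$ is orthogonal to $D$ if and only if $e(p)+e(q)+e(s)=0$ in $\F_3^{k-t}$. Indeed, orthogonality to $\mathbf{1}$ is automatic since every block has size $3\equiv 0\pmod 3$, while orthogonality to $d^{(1)},\ldots,d^{(k-t)}$ is exactly the vanishing of the coordinatewise sum of $e(p),e(q),e(s)$. Over $\F_3$ this sum--zero relation admits only two solution types: either $e(p)=e(q)=e(s)$, or the three values are pairwise distinct; the intermediate case (two equal, one different) is impossible, because $2e(p)+e(s)=0$ forces $e(s)=-2e(p)=e(p)$. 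Thus every block either lies inside a single group, giving case (1), or meets three distinct groups, giving case (2), and these are the only possibilities, which is what makes the splitting in the statement exhaustive and mutually exclusive.

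For part (1), fix a group $\mathcal{G}_i$ and let $\mathcal{B}_i$ be the blocks contained in it. I would verify the $STS$ axiom directly: any pair $\{p,q\}\subseteq\mathcal{G}_i$ lies in a unique block $B$ of $\mathcal{S}$, and its third point $s$ satisfies $e(s)=-e(p)-e(q)=-2e(p)=e(p)$, so $s\in\mathcal{G}_i$ and $B\in\mathcal{B}_i$; hence every pair of $\mathcal{G}_i$ is covered exactly once from within, and $(\mathcal{G}_i,\mathcal{B}_i)$ is an $STS(T)$ with $\binom{T}{2}/3=T(T-1)/6$ blocks. For part (2), recall that the lines of $AG(k-t,3)$ are exactly the triples of distinct points summing to $0$; so a line $l$ determines three groups $\mathcal{G}_A,\mathcal{G}_B,\mathcal{G}_C$ with $A+B+C=0$, and these are precisely the groups a case (2) block on $l$ can meet. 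For $p\in\mathcal{G}_A$ and $q\in\mathcal{G}_B$ the unique block of $\mathcal{S}$ through $\{p,q\}$ has third point $s$ with $e(s)=-A-B=C$, hence $s\in\mathcal{G}_C$; this is exactly the transversal condition for a $TD[3;T]$ on the three groups, and the count is $T\cdot T=T^2=3^{2t}$.

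Finally I would check exhaustiveness and disjointness (both immediate from the dichotomy) and settle the one point the sum--zero analysis leaves open, namely that all groups share the size $T=3^t$. Equality along a line is free: for a line $\{A,B,C\}$ and a fixed $p\in\mathcal{G}_A$, the assignment $q\mapsto s$ sending $q$ to the third point of the block through $\{p,q\}$ is a bijection $\mathcal{G}_B\to\mathcal{G}_C$, so collinear groups are equinumerous, and since any two points of $\Sigma$ are collinear all nonempty groups have a common size. The remaining input---that every group is nonempty with size exactly $T=3^t$ and that $M=3^{k-t}$---is where the hypothesis that $D$ is orthogonal to a \emph{genuine} $STS(3^k)$, together with the classification of the admissible $D$ in \cite{Xu}, must be used; this is the step I expect to be the main obstacle, in contrast to the purely linear-algebraic dichotomy that drives the splitting. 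A final count $M\cdot T(T-1)/6+\frac{3^{k-t}(3^{k-t}-1)}{6}\cdot T^2=\frac{3^k(3^k-1)}{6}=|\mathcal{B}|$ confirms that nothing has been missed.
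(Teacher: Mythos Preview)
The paper does not prove this lemma at all: it is quoted as Lemma~3.8 of Jungnickel--Tonchev \cite{Xu}, with no argument given. So there is no ``paper's own proof'' to compare against; your proposal is a reconstruction of the result rather than an alternative to anything in the present paper.

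That said, your reconstruction is sound and is essentially the argument one finds in \cite{Xu}. The evaluation map $e\colon\mathcal{P}\to\F_3^{k-t}$ built from a basis of $D$ modulo $\mathbf{1}$ is exactly the right device; orthogonality of a block to $D$ becomes $e(p)+e(q)+e(s)=0$, and the $\F_3$ dichotomy (three equal values or three pairwise distinct values, never two-and-one) is precisely what forces every block either into a single group or transversally across three groups indexed by an affine line. Your verifications of the $STS(T)$ structure in part~(1) and the $TD[3;T]$ structure in part~(2) are clean and correct. You are also right to flag the one genuinely nontrivial step: that all fibres of $e$ are nonempty of common size $T=3^t$. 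Your collinearity/bijection argument gives equinumerosity among nonempty fibres, but surjectivity of $e$ (equivalently, nonemptiness of every group) is not a consequence of linear independence of the $d^{(i)}$ alone and really does require the characterisation of admissible $D$ carried out earlier in \cite{Xu}; once that is granted, the common size is forced to be $3^k/3^{k-t}=3^t$ and your final block count closes the argument.
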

\begin{prop}
STS$(9)$ of $3$-rank $\leq7$ is resolvable.
\end{prop}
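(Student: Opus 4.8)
The plan is to instantiate the structure theorem (Lemma~\ref{lem2.1}) at $v = 9 = 3^2$, so $k = 2$, and then convert the resulting transversal design into parallel classes by decomposing a Latin square of order $3$ into transversals. Since the system has $3$-rank $\leq 7 = 9 - 2$, it is orthogonal to a subspace $D \in \mathcal D$ of dimension at least $2$; the relevant non-trivial instance takes $\sum = AG(1,3)$ and group size $T = 3^t = 3$, i.e. $t = 1$ and $M = 3^{k-t} = 3$.

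With these parameters, Lemma~\ref{lem2.1} splits $\mathcal B$ as follows: for $i = 1, 2, 3$ a single block $\mathcal B_i$ on the three points of each group $\mathcal G_i$ (because $T(T-1)/6 = 1$), and for the unique line $l$ of $\sum = AG(1,3)$ a set $\mathcal B_l$ of $T^2 = 9$ blocks forming a $TD[3;3]$ on $\mathcal G_1, \mathcal G_2, \mathcal G_3$. This accounts for all $3 + 9 = 12$ blocks of $STS(9)$. I would first observe that the three group-blocks $\mathcal G_1, \mathcal G_2, \mathcal G_3$ are pairwise disjoint and cover all $9$ points, so $P_0 = \{\mathcal G_1, \mathcal G_2, \mathcal G_3\}$ is already a parallel class; it then remains only to partition the $9$ blocks of $\mathcal B_l$ into parallel classes.

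Next I would identify this $TD[3;3]$ with a Latin square $LS(\mathcal G_1, \mathcal G_2, \mathcal G_3)$ of order $3$ in the sense of the excerpt: each block is a triple $(x,y,z) \in \mathcal G_1 \times \mathcal G_2 \times \mathcal G_3$, one point from each group. Three such blocks form a parallel class exactly when their first coordinates exhaust $\mathcal G_1$, their second coordinates exhaust $\mathcal G_2$, and their third coordinates exhaust $\mathcal G_3$ — that is, precisely when the three corresponding cells of the Latin square form a transversal (one cell per row, one per column, all symbols distinct). Thus partitioning $\mathcal B_l$ into parallel classes is equivalent to decomposing the order-$3$ Latin square into disjoint transversals.

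The key step is therefore this transversal decomposition. Since every Latin square of order $3$ is isotopic to the Cayley table of $\Z/3\Z$, whose three broken diagonals $\{(x,y) : y - x \equiv d \pmod 3\}$, $d \in \{0,1,2\}$, are pairwise disjoint transversals, the decomposition always exists and yields three further parallel classes $P_1, P_2, P_3$ of $\mathcal B_l$. Then $\mathcal B = P_0 \cup P_1 \cup P_2 \cup P_3$ is a partition into $4 = \frac{9-1}{2}$ parallel classes, proving resolvability. The only genuine obstacle is the transversal decomposition of the Latin square: it is immediate for order $3$, but it is exactly the point where the general $STS(3^k)$ argument must invoke orthogonal Latin squares, as foreshadowed in the abstract.
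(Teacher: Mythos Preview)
Your argument is correct and follows the same route as the paper: apply Lemma~\ref{lem2.1} with $k=2$, $t=1$, $M=3$ to split the $12$ blocks into the three group-triples (one parallel class) and a $TD[3;3]$ encoded by a Latin square of order~$3$, then decompose that square into three transversals. The paper's proof merely exhibits one explicit Latin square and checks the four parallel classes by hand, whereas you cover every possible order-$3$ Latin square via the isotopy to the $\Z/3\Z$ Cayley table; your version is therefore the more complete justification of the general statement, but the underlying mechanism is identical.
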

\begin{proof}
By Lemma \ref{lem2.1}, given a subspace $D$ from $\mathcal{D}_1$, the set of $STS(9)$ orthogonal to D is in one-to-one correspondence with the collections of
$3$ Steiner triple systems of order $3$ and $1$ Latin square of order $3$.
A generator matrix for a subspace $D$ from $\mathcal{D}_1$ is
\begin{center} $
\left[\begin{array}{ccccccccc}
    1&1&1&1&1&1&1&1&1\\
    0&0&0&1&1&1&2&2&2\\
\end{array}\right].
$
\end{center}
Let the Latin square of order $3$ be
{\small\begin{longtable}{| c | c | c |}
\hline
1&2&3\\
\hline
2&3&1\\
\hline
3&1&2\\
\hline
\end{longtable}}
\hspace{-0.6cm}Then the indence matrix of $STS(9)$ is shown as follow:
\begin{center} $
I_{9}=\left[\begin{array}{ccccccccc}
    1&1&1&0&0&0&0&0&0\\
    0&0&0&1&1&1&0&0&0\\
    0&0&0&0&0&0&1&1&1\\
    1&0&0&1&0&0&1&0&0\\
    1&0&0&0&1&0&0&1&0\\
    1&0&0&0&0&1&0&0&1\\
    0&1&0&1&0&0&0&1&0\\
    0&1&0&0&1&0&0&0&1\\
    0&1&0&0&0&1&1&0&0\\
    0&0&1&1&0&0&0&0&1\\
    0&0&1&0&1&0&1&0&0\\
    0&0&1&0&0&1&0&1&0\\
\end{array}\right].
$
\end{center}
Therefore, the nonzero positions of each row of $I_9$ are $\mathcal{B}=\big\{\{1,2,3\},\{4,5,6\},\{7,8,9\},\{1,4,7\},\\\{1,5,8\},\{1,6,9\}, \{2,4,8\},\{2,5,9\},\{2,6,7\},\{3,4,9\},\{3,5,7\},\{3,6,8\}\big\}.$
Obviously, $STS(9)$'s block set $\mathcal{B}$ is partitioned into four parallel  classes, i.e., \big\{\{1,2,3\},\{4,5,6\},\{7,8,9\}\big\},\{\{1,4,7\},\{2,5,\\9\},\{3,6,8\}\big\}, \big\{\{1,5,8\},\{2,6,7\},\{3,4,9\}\big\},\big\{\{1,6,9\},\{2,4,8\},\{3,5,7\}\big\}. So $STS(9)$ of $3$-rank $\leq7$ is resolvable.
\end{proof}

\begin{prop}
STS$(27)$ of $3$-rank $\leq24$ is resolvable.
\end{prop}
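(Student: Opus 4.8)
The plan is to run exactly the machinery of the $STS(9)$ case above, now with $v=27=3^3$ and the larger orthogonal subspace forced by the rank bound. Since $3$-rank $\leq 24=27-3$ says precisely that the system is orthogonal to some $D\in\mathcal{D}$ with $\dim D\geq 3$, I would fix a three-dimensional $D\in\mathcal{D}_2$ containing the all-one vector (passing to a $3$-dimensional subspace if $\dim D$ were larger), with generator matrix whose first row is the all-one vector and whose other two rows are
\[000000000\,111111111\,222222222 \qquad\text{and}\qquad 000111222\,000111222\,000111222.\]
These two rows cut the $27$ points into $9$ groups $\mathcal{G}_i$ of size $T=3$, indexed by their common value in rows $2$ and $3$. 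Feeding $D\in\mathcal{D}_2$ into Lemma~\ref{lem2.1} (just as the $STS(9)$ case fed in $\mathcal{D}_1$) forces $t=1$, $T=3$, $M=9$ and $\Sigma=AG(2,3)$, and splits $\mathcal{B}$ into (1) nine Steiner triple systems of order $3$, one per group $\mathcal{G}_i$, each being a single triple; and (2) for each of the $12$ lines $l$ of $AG(2,3)$ on these $9$ groups, a transversal design $TD[3;3]$ of $T^2=9$ blocks, that is, a Latin square of order $3$. The block count $9\cdot 1+12\cdot 9=117=\binom{27}{2}/3$ checks out.

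The nine single triples are exactly the groups $\mathcal{G}_i$ and hence already form one parallel class on all $27$ points. For the remaining $108$ line-blocks I would first record, as is implicit in the $STS(9)$ computation above, that each $TD[3;3]$ resolves into $3$ parallel classes on its own $9$ points; equivalently a Latin square of order $3$ decomposes into $3$ disjoint transversals, which is possible precisely because the order is odd. Hence every line $l$ produces three $3$-block partial classes $P_l^1,P_l^2,P_l^3$, each a parallel class on the $9$ points incident with $l$.

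I would then invoke the standard fact that the $12$ lines of $AG(2,3)$ fall into $4$ parallel classes, or ``directions'', each partitioning the $9$ groups---and therefore the $27$ points---into $3$ mutually disjoint lines. Fixing a direction with lines $l_1,l_2,l_3$ and setting $Q^s=P_{l_1}^s\cup P_{l_2}^s\cup P_{l_3}^s$ for $s=1,2,3$, each $Q^s$ consists of $9$ blocks meeting every point exactly once, since the three lines are point-disjoint and each $P_{l_j}^s$ is already parallel on its own line. Ranging over the $4$ directions yields $12$ such classes, and as every line belongs to exactly one direction, each line-block is used once. Adjoined to the group parallel class this gives $1+12=13=\tfrac{27-1}{2}$ parallel classes partitioning $\mathcal{B}$, which is resolvability.

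The step that looks like it needs care---fitting the three partial resolutions of a direction together---turns out to be automatic: parallel lines of $AG(2,3)$ are point-disjoint, so $Q^s$ is a parallel class however the individual Latin-square resolutions happen to be indexed, and in particular the argument is uniform over all $STS(27)$ orthogonal to $D$ regardless of which Latin squares occur. The genuine work is therefore the explicit bookkeeping already seen for $STS(9)$: writing down the $12$ Latin squares and the resulting $117$ blocks, together with the two routine checks that the chosen $D$ forces $t=1$ and $M=9$ in Lemma~\ref{lem2.1}, and that every Latin square of order $3$ really does split into $3$ disjoint transversals.
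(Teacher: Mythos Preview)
Your proposal is correct and follows essentially the same approach as the paper. The paper's own proof is just the single sentence ``The proof is similar to that of Proposition 3.2,'' and your write-up is precisely the natural elaboration: pass to $D\in\mathcal{D}_2$, invoke Lemma~\ref{lem2.1} with $t=1$, $T=3$, $M=9$, and combine the nine group-triples with the twelve resolved $TD[3;3]$'s along the four parallel classes of lines in $AG(2,3)$ to obtain $1+12=13$ parallel classes. This is exactly the $k=3$, $r=3$ instance of the argument the paper later formalises in Theorem~3.7 (where your four directions are the sets $J_l$), so nothing is missing. The only remark is that ``because the order is odd'' is not by itself a proof that every Latin square of order $3$ decomposes into three disjoint transversals, but since every Latin square of order $3$ is isotopic to the $\mathbb{Z}_3$ table this is indeed the routine check you flag at the end.
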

\begin{proof}
The proof is similar to that of Proposition 3.2.
\end{proof}
\hspace{-0.5cm}In order to give sufficient conditions for solvability of $STS(3^k)$, we first introduce the concept of orthogonal Latin square.
\begin{Def}
A orthogonal Latin square of order $n$ over two sets $Q$ and $E$, each consisting of $n$ symbols, is an $n\times n$ arrangement of cells, each cell containing an ordered pair $(q,e)$, where $q$ is in $Q$ and $e$ is in $E$, such that every row and every column contains each element of $Q$ and each element of $E$ exactly once, and that no two cells contain the same ordered pair.
\end{Def}
\begin{Remark}
Every of two orthogonal Latin squares of order $n$ includes $n$ parallel classes.
\end{Remark}

\begin{prop}
STS$(9)$ of $3$-rank $\leq8$ is resolvable. Moreover, if there always exsits an orthogonal Latin square for all the Latin squares of order $9$ which appear in the decomposition of $STS(27)$, then STS$(27)$ of $3$-rank $\leq25$ is resolvable.
\end{prop}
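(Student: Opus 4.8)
The plan is to deduce both statements from the structure theorem, Lemma~\ref{lem2.1}, by resolving each piece of the decomposition separately and then synchronising the pieces into parallel classes of the whole system. For the first statement, I would first note that every STS$(9)$ is orthogonal to the all-one vector (each block, viewed as a $0/1$-vector, has weight $3\equiv 0 \pmod 3$), so every STS$(9)$ automatically has $3$-rank $\le 8$. Since STS$(9)$ is unique up to isomorphism, the claim reduces to the resolvability of that single system, which is already witnessed by the $4$ explicit parallel classes exhibited in Proposition~3.2. Equivalently, Lemma~\ref{lem2.1} with $T=3$ presents it as three point-triples on three groups of $3$ points together with one $TD[3;3]$, i.e.\ a Latin square of order $3$; the three within-group triples form one parallel class, and since every order-$3$ Latin square has an orthogonal mate, Remark~3.5 supplies $3$ more, giving $4=\tfrac{9-1}{2}$ classes.

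For STS$(27)$ of $3$-rank $\le 25$ I would apply Lemma~\ref{lem2.1} with $T=9$, so that $\Sigma=AG(1,3)$ is a single line through all three groups. The block set then splits into three copies of STS$(9)$ on groups $\mathcal{G}_1,\mathcal{G}_2,\mathcal{G}_3$ of $9$ points each, plus one transversal design $TD[3;9]$ on these groups, equivalently one Latin square of order $9$. A count gives $3\cdot 12 + 9^2 = 117 = \binom{27}{2}/3$, so I must assemble exactly $\tfrac{27-1}{2}=13$ parallel classes.

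The within-group part I would resolve first: by the first statement each STS$(9)$ splits into $4$ parallel classes, and choosing the $p$-th parallel class in each of the three groups simultaneously (for $p=1,\dots,4$) produces a triple cover of all $27$ points, hence a parallel class of STS$(27)$. This yields $4$ parallel classes that exhaust the $36$ within-group blocks. It then remains to resolve the $81$ blocks of the $TD[3;9]$ into $9$ further classes, and this is where the hypothesis enters and where the only genuine difficulty lies.

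The key observation is that a parallel class of $TD[3;9]$ is precisely a transversal of the associated Latin square (one cell per row, per column and per symbol), so resolving the transversal design into $9$ parallel classes is the same as decomposing the Latin square into $9$ pairwise disjoint transversals, which is in turn equivalent to the existence of an orthogonal mate. Under the stated hypothesis every order-$9$ Latin square occurring in the decomposition admits such an orthogonal Latin square, so Remark~3.5 supplies the required $9$ parallel classes; together with the $4$ within-group classes this gives $4+9=13$, proving resolvability. The main obstacle is exactly this orthogonal-mate condition: unlike the order-$3$ case, a Latin square of order $9$ need not decompose into disjoint transversals, which is why the conclusion for STS$(27)$ must remain conditional on that hypothesis rather than being proved unconditionally.
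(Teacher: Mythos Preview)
Your argument is correct and follows essentially the same route as the paper: apply Lemma~\ref{lem2.1} to split the system into three STS$(9)$'s and one order-$9$ Latin square, obtain $4$ parallel classes by synchronising the resolutions of the three sub-systems, and extract the remaining $9$ classes from the transversal design via the orthogonal-mate hypothesis (Remark~3.5). Your write-up is actually more thorough than the paper's, which does not separately justify the STS$(9)$ claim and merely displays one explicit orthogonal pair of order-$9$ squares instead of arguing in general.
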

\begin{proof}
By Lemma \ref{lem2.1}, given a subspace $D$ from $\mathcal{D}_1$, the set of $STS(27)$ orthogonal to D is in one-to-one correspondence with the collections of
$3$ $STS$ of order $9$ and $1$ Latin square of order $9$.
Let the (orthogonal) Latin square of order $9$ be \begin{center}
{\small\begin{longtable}{| c | c | c | c | c | c | c | c | c |}
\hline
$1\alpha$&$2\beta$&$3\gamma$&$4\delta$&$5\epsilon$&$6\varepsilon$&$7\zeta$&$8\theta$&$9\eta$\\
\hline
$2\gamma$&$3\alpha$&$1\beta$&$5\varepsilon$&$6\delta$&$4\epsilon$&$8\eta$&$9\zeta$&$7\theta$\\
\hline
$3\beta$&$1\gamma$&$2\alpha$&$6\epsilon$&$4\varepsilon$&$5\delta$&$9\theta$&$7\eta$&$8\zeta$\\
\hline
$4\zeta$&$5\eta$&$6\theta$&$7\beta$&$8\alpha$&$9\gamma$&$1\delta$&$2\epsilon$&$3\varepsilon$\\
\hline
$5\theta$&$6\zeta$&$4\eta$&$8\gamma$&$9\beta$&$7\alpha$&$2\varepsilon$&$3\delta$&$1\epsilon$\\
\hline
$6\eta$&$4\theta$&$5\zeta$&$9\alpha$&$7\gamma$&$8\beta$&$3\epsilon$&$1\varepsilon$&$2\delta$\\
\hline
$7\delta$&$8\epsilon$&$9\varepsilon$&$1\zeta$&$2\theta$&$3\eta$&$4\alpha$&$5\beta$&$6\gamma$\\
\hline
$8\varepsilon$&$9\delta$&$7\epsilon$&$2\eta$&$3\zeta$&$1\theta$&$5\gamma$&$6\alpha$&$4\beta$\\
\hline
$9\epsilon$&$7\varepsilon$&$8\delta$&$3\theta$&$1\eta$&$2\zeta$&$6\beta$&$4\gamma$&$5\alpha$\\
\hline
\end{longtable}}\vspace{-0.6cm}
\end{center}
The first part of the decomposition produces $4$ parallel classes, and the other part of the decomposition produces $9$ parallel classes since there exists an orthogonal Latin square for the Latin square of order $9$.
It means that $STS(27)$'s block set $\mathcal{B}$ is partitioned into $13$ parallel  classes. Therefore, $STS(27)$ of $3$-rank $\leq25$ is resolvable.
\end{proof}

All in all, if the $STS(3^k)$ is resolvable, then $\frac{3^k}{3\times3^{k-r+1}}=3^{r-2}$ Latin squares can be considered as a group to produce $3^{k-r+1}$ parallel classes in $STS(3^k)$ of $3$-rank $\leq3^k-r$. \\

The following theorem is the main result in this paper.
\begin{thm}
Let $S$ be an $STS(3^k)$ of $3$-rank at most $3^k-r$. If there always exsits an orthogonal Latin square for each Latin square of order $3^{k-r+1}$ that appears in the decomposition of $S$ and all STS$(3^{k-r+1})$ which appear in the decomposition of $S$ are resolvable, then $S$ is resolvable.
\end{thm}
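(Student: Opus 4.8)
The plan is to invoke Lemma~\ref{lem2.1} with $t=k-r+1$, so that each group $\mathcal{G}_i$ carries $T=3^{k-r+1}$ points, there are $M=3^{k-t}=3^{r-1}$ groups, and the affine geometry is $\Sigma=AG(r-1,3)$ whose $3^{r-1}$ points are exactly these groups. Under this identification the block set $\mathcal{B}$ of $S$ splits into the within-group blocks of part~(1) and the transversal blocks of part~(2), and I would manufacture parallel classes of $S$ from each part separately, then check that together they exhaust $\mathcal{B}$ and number exactly $\frac{3^k-1}{2}$.

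First I would handle part~(1). Each $\mathcal{S}_i$ is an $STS(3^{k-r+1})$, hence resolvable by hypothesis, so it decomposes into $\frac{T-1}{2}$ parallel classes on the $T$ points of $\mathcal{G}_i$. Because the $M$ groups are pairwise disjoint with union $\mathcal{P}$, I would index the parallel classes of each $\mathcal{S}_i$ by $1,\dots,\frac{T-1}{2}$ and, for each index $j$, take the union over $i$ of the $j$-th parallel class. Each such union covers every point of $\mathcal{P}$ exactly once, so it is a parallel class of $S$; this produces $\frac{T-1}{2}$ parallel classes and uses every part~(1) block once.

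The substantive step is part~(2). Each line $l$ of $\Sigma$ yields a $TD[3;T]$, that is, a Latin square of order $T$, which by hypothesis admits an orthogonal mate; by Remark~3.5 this Latin square splits into $T$ disjoint transversals, i.e.\ $T$ parallel classes on the $3T$ points in the three groups of $l$. A single such transversal covers only $3T$ of the $3^k$ points, so to globalise I would use the resolution of $AG(r-1,3)$ into its $\frac{3^{r-1}-1}{2}$ directions (parallel classes of lines), each consisting of $3^{r-2}=\frac{3^k}{3\cdot 3^{k-r+1}}$ mutually parallel lines that partition the $3^{r-1}$ groups. Fixing a direction $d$ and an index $j\in\{1,\dots,T\}$, I would take the $j$-th transversal of the Latin square of each of the $3^{r-2}$ lines in $d$; since these lines partition the groups, the corresponding point sets of size $3T$ are pairwise disjoint and cover $\mathcal{P}$, so their union is a parallel class of $S$. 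Letting $d$ and $j$ range uses every part~(2) block exactly once and gives $\frac{3^{r-1}-1}{2}\cdot T$ parallel classes.

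Finally I would verify the count: parts~(1) and~(2) together contribute
\[
\frac{T-1}{2}+\frac{(3^{r-1}-1)T}{2}=\frac{3^{r-1}T-1}{2}=\frac{3^k-1}{2}
\]
parallel classes, exactly the number required by Definition~2.4, and since the two families partition $\mathcal{B}$ the system $S$ is resolvable. The hard part will be part~(2): a single Latin square only resolves $3T$ points, so one must combine the local transversals along the $3^{r-2}$ parallel lines of each direction of $\Sigma$, and it is precisely the orthogonal-mate hypothesis that supplies the $T$ disjoint transversals making this matching possible. The part~(1) combination and the arithmetic are then routine.
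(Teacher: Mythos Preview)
Your proposal is correct and follows essentially the same route as the paper: split $\mathcal{B}$ via Lemma~\ref{lem2.1}, resolve the $3^{r-1}$ small Steiner systems in parallel to obtain the $\frac{T-1}{2}$ part~(1) classes, and for part~(2) bundle the Latin squares in groups of $3^{r-2}$ along the parallel classes of lines of $AG(r-1,3)$, using the orthogonal-mate hypothesis to extract $T=3^{k-r+1}$ disjoint transversals from each square and gluing them across each bundle into global parallel classes. The only cosmetic difference is that the paper encodes the bundling via the relation $C_i+C_j+C_k=0$ on columns of the check matrix rather than via directions of $\Sigma$, but this is precisely the resolution of the underlying affine $STS(3^{r-1})$ on the groups, so the two arguments coincide.
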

\begin{proof}
By Lemma \ref{lem2.1}, given a subspace $D$ from $\mathcal{D}_{r-1}$, the set of $S$ orthogonal to D is in one-to-one correspondence with the collections of
$3^{r-1}$ $S$ of order $3^{k-r+1}$ and $\frac{3^{r-1}(3^{r-1}-1)}{6}$ Latin squares of order $3^{k-r+1}$.
Let $G_1=\{1,2,3,\cdots,3^{k-r+1}-1,3^{k-r+1}\}$, $G_2=\{3^{k-r+1}+1,3^{k-r+1}+2,3^{k-r+1}+3,\cdots,2\times3^{k-r+1}-1, 2\times3^{k-r+1}\}$, $\ldots$, $G_{3^{r-1}}=\{3^k-3^{k-r+1}+1,3^k-3^{k-r+1}+2,3^k-3^{k-r+1}+3,\cdots,3^k\}$, the first part $3^{r-1}$ STS$(3^{k-r+1})$ can be denoted by $STS(G_1)\cup STS(G_2)\cup\cdots\cup STS(G_{3^{r-1}})$, this $1$-$(3^k,3,\frac{3^{k-r+1}-1}{2})$ design is resolvable. For each $1\leq i\leq 3^{r-1}$, STS$(G_i)={G_i}^1\cup {G_i}^2 \cup {G_i}^3 \cup {G_i}^4\cup\cdots\cup {G_i}^{\frac{3^{k-r+1}-1}{2}}$, then $\cup_{i}{G_i}^1$, $\cup_{i}{G_i}^2$, $\cdots$, $\cup_{i}{G_i}^{\frac{3^{k-r+1}-1}{2}-1}$, and $\cup_{i}{G_i}^{\frac{3^{k-r+1}-1}{2}}$ are parallel classes. So the first part of the decomposition produces $\frac{3^{k-r+1}-1}{2}$ parallel classes.

 Let $C_p$ be $p$-th column of the check matrix of $S$, there exist $i,j,k$ such that $C_i+C_j+C_k=0$. Let $J_l=\{(i,j,k)\mid C_i+C_j+C_k=0\}$, it is easy to verify that $J_l$ is $S$. Thus $g=\frac{3^{r-1}(3^{r-1}-1)}{6}$ Latin squares of order $3^{k-r+1}$ can be denoted by \begin{equation}
\bigcup_{i,j,k}LS(H_i,H_j,H_k)=\bigcup_{l=1}^{\frac{g}{3^{r-2}}}\bigcup_{(i,j,k)\in J_l} LS(H_i,H_j,H_k),
\end{equation}$\cup_{(i,j,k)\in J_l} LS(H_i,H_j,H_k)$ is $1$-$(v,3,3^{k-r+1})$ design and it produces $3^{k-r+1}$ parallel classes, the other part of the decomposition produces $\frac{3^{r-1}(3^{r-1}-1)}{6}/3^{r-2}\times3^{k-r+1}=\frac{3^k-3^{k-r+1}}{2}$  parallel classes.

It means that the block of $S$ is partitioned into $\frac{3^k-1}{2}$ parallel classes. Therefore, we obtain that $S$ is resolvable.
\end{proof}

\begin{Cor}
The number $(k,t)$ of isomorphism classes of resolvable Steiner triple systems on $3^k$ points with $3$-rank exactly $3^k-k-1+t$, where $2\leq t\leq k-1$, satisfies $$(k,t)\geq \frac{{\widetilde{{N_1}}(T)}^M\cdot{\widetilde{{N_3}}(T)}^{M(M-1)/6}}{(T!)^M\cdot|AGL(k-t,3)|}
-\frac{{{{N_1}}(T')}^{M'}\cdot{{N_3}(T')}^{{M'}({M'}-1)/6}}{{((T')!)^{{M'}-k+t-2}}},$$
where $T=3^t$, $T'=3^{t-1}$, $M=3^{k-t}$ and $M'=3^{k-t+1}$; $N_1(T')$ is the number of $STS(T')$; $\widetilde{N_1}(T)$ is the number of resolvable $STS(T)$; $N_3(T')$ is the number of Latin squares of order $T'$; $\widetilde{N_3}(T')$ is the number of Latin squares of order $T'$ having an orthogonal mate.
\end{Cor}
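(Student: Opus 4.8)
The plan is to turn the resolvability criterion of Theorem 3.6 into a lower bound on the number of isomorphism classes by a standard orbit-counting argument, and then to isolate the classes of \emph{exact} $3$-rank by subtracting an over-count of the classes of strictly larger rank deficiency. First I would set $r=k-t+1$, so that ``$3$-rank at most $3^k-r$'' reads ``$3$-rank at most $3^k-k-1+t$'', the order of the small systems becomes $3^{k-r+1}=3^t=T$, the number of groups becomes $3^{r-1}=3^{k-t}=M$, and the number of Latin squares becomes $M(M-1)/6$; this is precisely the decomposition of Lemma \ref{lem2.1} over a subspace $D\in\mathcal{D}_{k-t}$ of dimension $k-t+1$. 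By Theorem 3.6, every construction in which each of the $M$ component systems on a group $\mathcal{G}_i$ is a resolvable $STS(T)$ and each of the $M(M-1)/6$ Latin squares admits an orthogonal mate produces a resolvable $STS(3^k)$ of $3$-rank at most $3^k-k-1+t$.

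Next I would bound from below the number $N_{\le}(t)$ of isomorphism classes of resolvable $STS(3^k)$ of $3$-rank at most $3^k-k-1+t$. Fixing the standard subspace $D$ and the associated partition into groups $\mathcal{G}_1,\dots,\mathcal{G}_M$, the number of labelled admissible constructions is $\widetilde{N_1}(T)^{M}\cdot\widetilde{N_3}(T)^{M(M-1)/6}$: one chooses a resolvable $STS(T)$ on each of the $M$ groups and a Latin square having an orthogonal mate on each of the $M(M-1)/6$ lines of $AG(k-t,3)$. Two such constructions that differ by a relabelling inside the groups or by an affine symmetry of the groups yield isomorphic systems, so the group $G$ of order $(T!)^{M}\cdot|AGL(k-t,3)|$ acts on the set of constructions with isomorphic images along each orbit. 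Since each isomorphism class arises from at most $|G|$ of these constructions, I obtain
$$N_{\le}(t)\ \ge\ \frac{\widetilde{N_1}(T)^{M}\cdot\widetilde{N_3}(T)^{M(M-1)/6}}{(T!)^{M}\cdot|AGL(k-t,3)|}.$$

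To isolate the classes of \emph{exact} rank $3^k-k-1+t$ I would use $(k,t)=N_{=}(t)=N_{\le}(t)-N_{\le}(t-1)$, where $N_{\le}(t-1)$ counts the resolvable classes of $3$-rank at most $3^k-k-2+t$. Every such system has rank deficiency at least $k-t+2$, hence decomposes over a subspace $D'$ of dimension $k-t+2$ into $M'=3^{k-t+1}$ groups of size $T'=3^{t-1}$ together with $M'(M'-1)/6$ Latin squares; dropping the resolvability and orthogonal-mate requirements can only enlarge the count, so the number of labelled such constructions is at most $N_1(T')^{M'}\cdot N_3(T')^{M'(M'-1)/6}$. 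Because the within-group relabellings alone produce at least $((T')!)^{M'-(k-t+2)}$ distinct constructions of any fixed system, each isomorphism class is hit at least $((T')!)^{M'-k+t-2}$ times, whence
$$N_{\le}(t-1)\ \le\ \frac{N_1(T')^{M'}\cdot N_3(T')^{M'(M'-1)/6}}{((T')!)^{M'-k+t-2}}.$$
Combining the two displays via $N_{=}(t)\ge N_{\le}(t)-N_{\le}(t-1)$ yields the stated inequality for $(k,t)$.

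The main obstacle is the pair of orbit-size estimates, i.e.\ controlling how many labelled constructions fall into a single isomorphism class. For the lower bound one must verify that no isomorphism between two constructions over the fixed $D$ can escape the group $G=(T!)^{M}\rtimes AGL(k-t,3)$; for the upper bound one must justify the denominator $((T')!)^{M'-k+t-2}$, which amounts to bounding the stabilizer of a generic construction under the within-group symmetric groups by $((T')!)^{k-t+2}$, the exponent $k-t+2=\dim D'$ measuring the within-group relabellings that the stabilizer can absorb. A subsidiary difficulty is that a single $STS(3^k)$ may be orthogonal to several subspaces in $\mathcal{D}$, so one should either fix a canonical decomposition or absorb the resulting multiplicity into the constants; checking that this multiplicity does not overwhelm the estimates is where the argument is most delicate.
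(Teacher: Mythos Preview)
Your proposal is correct and follows essentially the same approach as the paper; the paper's own proof is a one-line deferral to Theorem~4.4 of \cite{Xu}, stating only that the argument is identical except that one counts resolvable systems, and what you have written is precisely the orbit-counting-then-subtraction argument that underlies that theorem, with the resolvable/orthogonal-mate restrictions inserted via Theorem~3.7 at the construction stage.
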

\begin{proof}
The proof is essentially the same as for Theorem $4.4$ in \cite{Xu}, with the only difference that we count only resolvable $STS$.
\end{proof}

\section{Acknowledgement}
\hspace*{0.6cm} This research is supported by National Natural Science Foundation of China (61672036) and Excellent Youth Foundation of Natural Science Foundation of Anhui Province (1808085J20).

\end{document}